\newtheorem{theorem}{Theorem}[]
\newtheorem{lemma}{Lemma}
\renewcommand{\mod}{\, \textrm{mod }}
\newcommand{\etc}{,\dots,}
\newcommand{\R}{\mathbb{R}}
\newcommand{\Z}{\mathbb{Z}}
\newcommand{\eps}{\varepsilon}
\newcommand{\al}{\alpha}
\newcommand{\be}{\beta}
\newcommand{\ga}{\gamma}
\newcommand{\conv}{\mathrm{conv} \,}
\begin{document}

\title{Small subset sums} %of prescribed cardinality}

\author{Gergely Ambrus}
\author{Imre B\'ar\'any}
\author{Victor Grinberg}

\begin{abstract}
Let $\|.\|$ be a norm in $\R^d$ whose unit ball is $B$. Assume that $V\subset B$ is a finite set of cardinality $n$, with $\sum_{v \in V} v=0$. We show that for every integer $k$ with $0 \le k \le n$, there exists a subset $U$ of $V$ consisting of $k$ elements such that $\|\sum_{v \in U}v\| \le\lceil d/2 \rceil$. We also prove that this bound is sharp in general. We improve the estimate to $O(\sqrt d)$ for the  Euclidean and the max norms. An application on vector sums in the plane is also given.
\end{abstract}

\subjclass[2010]{52A40, 05B20}
\keywords{Vector sums, Steinitz theorem, normed spaces.}

\maketitle

\section{Definitions, notation, results}\label{introd}
We consider the real $d$-dimensional vector space $\R^d$ with a norm $\|.\|$ whose unit ball is $B$. For a finite set $U\subset \R^d$, $|U|$ stands for the cardinality of $U$, and $s(U)$ for the sum of the elements of $U$, so $s(U)=\sum_{u \in U}u$, and $s(\emptyset)=0$ of course.

\medskip
In 1914 Steinitz \cite{St_20} proved that, in the case of the Euclidean norm, for every finite set $V \subset B$ with $|V|=n$ and $s(V)=0$, there exists an ordering $v_1, \ldots, v_n$ of the vectors in $V$ such that {\em all partial sums} have norm at most $2d$, that is
\[
\max_{k=1\etc n}\Big\| \sum_{1}^k v_i \Big\| \le 2d.
\]
It is important here that the bound $2d$ does not depend on $n$, the size of~$V$. Steinitz's result implies that for every norm and every finite $V \subset B$ with $s(V)=0$ there is an ordering along which all partial sums are bounded by a constant that depends only on $B$. Let $S(B)$ denote the smallest such constant for a given norm with unit ball $B$, and set $S(d)= \sup S(B)$ where the supremum is taken over all norms in $\R^d$. The best known bounds on $S(d)$ are: $S(d)\le d$, proved by Sevastyanov~\cite{Sev}, and by Grinberg and Sevastyanov \cite{Gr_Se}, and $S(d)\ge \frac {d+1} 2 $, which is shown by an example coming from the $\ell_1$ norm~\cite{Gr_Se}. For specific norms, stronger results may hold. In particular, for $\ell_2$ and $\ell_{\infty}$, it is conjectured that the right order of magnitude of $S(B)$ is $\sqrt{d}$ -- although not even $o(d)$ is known.

\medskip
Steinitz's result immediately implies that for every finite set $V \subset B$ with $s(V)=0$ and every integer $k$, $0\le k \le |V|$, there is a subset $U \subset V$ such that $|U|=k$ and $\|s(U)\|$ is not greater than a constant depending only on $d,B,k$, for instance $S(B)$ is such a constant. Let $T(B,k)$ be the smallest constant with this property, set $T(B)=\sup_k T(B,k)$, and $T(d)=\sup T(B)$ where the supremum is taken over all norms in $\R^d$. It is evident that $T(B,k)\le k$.

\medskip
In this paper we investigate $T(B,k), T(B)$ and $T(d)$. Here come our main results. First, the estimate for general norms.

\begin{theorem} \label{thm1}
Let $B$ be the unit ball of an arbitrary norm on $\R^d$. For any finite set $V \subset B$ with $s(V)=0$, and for any $k\le |V|$, there exists a subset $U \subset V$ with $k$ elements, so that
\[
\| s(U) \| \le \left \lceil \frac d 2 \right \rceil.
\]
In other words, $T(d) \le \left \lceil \frac d 2 \right \rceil$.
\end{theorem}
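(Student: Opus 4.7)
The plan is linear-programming relaxation followed by a careful integral rounding. Consider the polytope
\[
Q = \Big\{ \lambda \in [0,1]^V : \sum_{v \in V} \lambda_v = k,\ \sum_{v \in V} \lambda_v v = 0 \Big\}.
\]
It is nonempty because the uniform weights $\lambda_v = k/n$ lie in $Q$ (using $s(V)=0$). Let $\lambda^*$ be any vertex of $Q$. Since $Q$ is cut out from $[0,1]^V$ by the $d+1$ affine equalities $\sum_v \lambda_v = k$ and $\sum_v \lambda_v v = 0$, at any vertex at least $n-(d+1)$ of the box constraints must be tight, so at most $d+1$ coordinates of $\lambda^*$ are strictly between $0$ and $1$. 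Write $F$ for this set of fractional coordinates and $I = \{v : \lambda^*_v=1\}$; then $m := k-|I| = \sum_{v\in F}\lambda^*_v$ is a nonnegative integer. For any $U' \subseteq F$ with $|U'|=m$, the set $U = I\cup U'$ has size $k$, and from $\sum_V \lambda^*_v v =0$ we get
\[
s(U) = \sum_{v\in U'} (1-\lambda^*_v)\, v \;-\; \sum_{v\in F\setminus U'} \lambda^*_v\, v.
\]

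It remains to choose $U'$ so that $\|s(U)\| \le \lceil d/2\rceil$. Because $s(V\setminus U) = -s(U)$, we may additionally assume $m \le |F|/2$ (otherwise replace $k$ by $n-k$ and work with $1-\lambda^*$). A blunt triangle-inequality bound only yields $\|s(U)\|\le |F|\le d+1$, so the task is to shave off an essentially factor-of-two. This rounding step is the heart of the proof.

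One useful input is that $0$ lies in the convex hull of the $\binom{|F|}{m}$ candidate error vectors $\{s(U) : |U'|=m\}$, as seen by writing $\lambda^*|_F$ as a convex combination of vertices of the hypersimplex $\{x\in[0,1]^F : \sum x_v = m\}$ and pushing through the linear map $x \mapsto \sum_{v \in F} x_v v$. The natural strategies are either a Carath\'eodory-type selection picking a particular candidate among those whose convex combination gives $0$, or an exchange argument swapping elements between $U'$ and $F\setminus U'$ while tracking $s(U)$; both should exploit the balance $\sum_{v \in F}\mu_v = 0$ on the rounding coefficients $\mu_v \in \{-\lambda^*_v,\, 1-\lambda^*_v\}$ together with the fact that all vectors in $F$ lie in $B$. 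The main obstacle I expect is precisely this: the LP-vertex argument supplies only the loose bound $d+1$, and extracting the sharp $\lceil d/2\rceil$ appears to require a delicate combinatorial-geometric analysis, possibly by pairing $U'$ with $F\setminus U'$ to gain a symmetry that halves the error, or by slicing the zonotope generated by $F$ by the hyperplane $\sum \mu_v = 0$ and locating $0$ near the center of that slice.
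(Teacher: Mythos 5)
Your LP set-up is exactly the right one and matches the paper's: same polytope, same vertex argument showing at most $d+1$ fractional coordinates, same decomposition of $s(U)$ into the error term $\sum_{v\in U'}(1-\lambda^*_v)v - \sum_{v\in F\setminus U'}\lambda^*_v v$. But you then flag the rounding step as the "heart of the proof" and leave it open, offering only speculative directions (Carath\'eodory selection, exchange arguments, zonotope slicing). That is a genuine gap, and it is the whole ballgame. The resolution is simpler than any of your guesses: take $U'$ to consist of the $m$ elements of $F$ with the \emph{largest} coefficients $\lambda^*_v$. Then the average coefficient over $U'$ is at least the overall average $a = m/|F|$, and the average over $F\setminus U'$ is at most $a$, so by the triangle inequality
\[
\|s(U)\| \le \sum_{v\in U'}(1-\lambda^*_v) + \sum_{v\in F\setminus U'}\lambda^*_v \le m(1-a)+(|F|-m)a = \frac{2m(|F|-m)}{|F|} \le \frac{|F|}{2}.
\]
With $|F|\le d+1$ this gives $\lceil d/2\rceil$ when $d$ is odd, or when $d$ is even and $|F|\le d$. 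You are also missing the remaining case: $d$ even with $|F|=d+1$ gives only $(d+1)/2$, which exceeds $d/2$. The paper handles it by noting that $d+1$ vectors in $\R^d$ are linearly dependent, perturbing $\lambda^*$ along that dependence until a fractional coordinate hits $0$ or $1$, then rerunning the averaging bound on the reduced fractional set of size $d$ (with some care since the perturbed coefficients no longer sum to an integer exactly). Your "pairing to halve the error" and "zonotope slicing" ideas are not developed far enough to tell whether they would close either gap.
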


\begin{theorem} \label{thm2}
For every $d \ge 1$, there exists a norm in $\R^d$ with unit ball $B$, so that $T(B,k)= \left \lceil \frac d 2 \right \rceil$ for infinitely many values of $k$. Also, $ T(B,k) =k$ for all $k \le \left\lfloor \frac d 2 \right\rfloor$.
\end{theorem}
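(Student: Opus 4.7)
The plan is to exhibit a single norm $B$ on $\R^d$ that witnesses both claims, based on a regular simplex. Let $v_0,\dots,v_d\in\R^d$ be the vertices of a regular $d$-simplex centered at the origin, so $\sum_{i=0}^d v_i=0$ and they span $\R^d$; I would take
\[
B := \conv\{\pm v_0,\pm v_1,\dots,\pm v_d\}.
\]
Each $v_i$ is an extreme point of $B$, so $\|v_i\|_B=1$. (For $d=3$ this $B$ turns out to be the cube after a rotation; in general it is a centrally symmetric polytope with $2(d+1)$ vertices.)

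The technical core of the proof will be a formula for the $B$-norm of any non-negative combination of the $v_i$. Since $\|y\|_{B^*}=\max_i|\langle y,v_i\rangle|$ and $\sum_i v_i=0$, linear programming duality should give
\[
\Big\|\sum_i a_i v_i\Big\|_B \;=\; \max\Bigl\{\textstyle\sum_i a_i c_i : c\in\R^{d+1},\ \sum_i c_i=0,\ |c_i|\le 1\Bigr\} \;=\; \min_{\mu\in\R}\sum_i|a_i-\mu|,
\]
i.e.\ the sum of absolute deviations of $(a_i)$ from its median. Once this is in hand, all later arguments reduce to a well-understood statistical quantity.

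Given this formula, the first claim is immediate: take $V=\{v_0,\dots,v_d\}$. A $k$-subset $U$ gives $(a_i)\in\{0,1\}^{d+1}$ with exactly $k$ ones, and when $k\le\lfloor d/2\rfloor$ strictly more than half the entries are zero, so $\mathrm{med}(a)=0$ and $\|s(U)\|_B=\sum_i a_i=k$. This shows $T(B,k)\ge k$; the reverse inequality is trivial since $V\subset B$.

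For the second claim, I would take $V$ to consist of $m$ copies of each $v_i$, so a $k$-subset corresponds to integers $(a_0,\dots,a_d)$ with $\sum_i a_i=k$ and $0\le a_i\le m$. Writing $k=(d+1)q+r$ with $r\in\{0,\dots,d\}$, a swap/majorization argument (replacing any pair $(a_i,a_j)$ with $a_i\ge a_j+2$ by $(a_i-1,a_j+1)$ does not increase the median deviation) should show that $\sum_i|a_i-\mathrm{med}(a)|$ is minimized by the near-uniform tuple with $r$ entries equal to $q+1$ and the remaining $d+1-r$ equal to $q$, the minimum value being $\min(r,d+1-r)$. Choosing $r\in\{\lfloor(d+1)/2\rfloor,\lceil(d+1)/2\rceil\}$ forces this minimum to equal $\lceil d/2\rceil$, and letting $q$ range over $\N$ (with $m\ge q+1$) produces an infinite arithmetic progression of values of $k$ with $T(B,k)=\lceil d/2\rceil$. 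The hard part will be the discrete optimization lemma establishing that the near-uniform tuple is in fact the minimizer, since $\mathrm{med}(a)$ itself depends on the tuple being varied; everything else follows cleanly from the duality formula above.
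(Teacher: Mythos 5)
Your approach is essentially the paper's: the paper also takes $V$ to be copies of a spanning set of $d+1$ unit vectors with a single linear dependence (they use $e_1,\dots,e_d$ and $e_0=-\sum e_i$; your regular simplex is linearly equivalent, giving an isometric norm), sets $B=\conv\{\pm v_i\}$, and derives the same representation $\big\|\sum_i a_i v_i\big\|_B=\min_{\mu\in\R}\sum_i|a_i-\mu|$. The genuine difference is in the lower-bound step. You propose to determine the exact minimum of $\min_\mu\sum_i|a_i-\mu|$ over integer tuples with $\sum a_i=k$ via a swap/majorization lemma (the function is symmetric and convex, hence Schur-convex, so the near-uniform tuple is the minimizer once $m\geq q+1$), and you rightly flag this as the hard part. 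The paper avoids that optimization entirely with a short arithmetic trick: since the $a_i$ are integers, $\mu\mapsto\sum|a_i-\mu|$ is piecewise linear with integer breakpoints, so the minimizer can be taken to be an integer $c$; then
\[
\sum_i|a_i-c|\;\geq\;\Big|\sum_i(a_i-c)\Big|\;=\;\big|k-(d+1)c\big|\;\geq\;\left\lceil\tfrac d2\right\rceil
\]
whenever $k\equiv\lceil d/2\rceil\pmod{d+1}$, because $\lceil d/2\rceil$ is the minimal nonzero residue distance modulo $d+1$. This gives the lower bound directly, with no need to identify the exact minimizer. Your version also establishes the minimum exactly, which is stronger information but more work; both reach the same conclusion. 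For the claim $T(B,k)=k$ when $k\leq\lfloor d/2\rfloor$, your direct computation (median of a $0/1$-tuple with $\leq\lfloor d/2\rfloor$ ones is $0$, hence norm $=k$) is clean and equivalent in substance to the paper's contradiction argument (augment $U$ to size $\lceil d/2\rceil$ and invoke the bound just proved). In short: correct plan, same construction, but the paper's integrality-plus-triangle-inequality trick replaces your majorization lemma and considerably shortens the proof.
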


Theorems~\ref{thm1} and \ref{thm2} imply that $T(d)=\left \lceil \frac d 2 \right \rceil$ for all integers $d \ge 1$.

\medskip
One expects that for specific norms better estimates are valid. We have proved this in some cases. The unit ball of the norm $\ell_p^d$ will be denoted by~$B_p^d$. We have the following results in the cases $p=1,2,\infty$.

\begin{theorem}\label{ell1}
$\frac d2 \le T(B_1^d)\le \left\lceil \frac d2 \right\rceil$.
\end{theorem}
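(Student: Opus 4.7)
The upper bound $T(B_1^d) \le \lceil d/2 \rceil$ is an immediate consequence of Theorem~\ref{thm1} applied to the $\ell_1$ norm, since in general $T(B) \le T(d) \le \lceil d/2 \rceil$. So my plan is to concentrate the work on the matching lower bound $T(B_1^d) \ge d/2$, which I propose to establish by exhibiting one explicit configuration.

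My candidate is
\[
V = \{e_1, \ldots, e_d\}\ \cup\ \{\underbrace{-\tfrac{1}{d}\mathbf{1}, \ldots, -\tfrac{1}{d}\mathbf{1}}_{d \text{ copies}}\},
\]
where $e_1, \ldots, e_d$ is the standard basis of $\R^d$ and $\mathbf{1} = (1, \ldots, 1)$. Each listed vector has $\ell_1$ norm exactly $1$, so $V \subset B_1^d$, and $s(V) = \mathbf{1} - d \cdot \tfrac{1}{d}\mathbf{1} = 0$. Hence $V$ is a legal input; the informative choice of $k$ will be $k = d$.

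For an arbitrary $d$-element subset $U \subset V$, parametrise $U$ by the number $s \in \{0, 1, \ldots, d\}$ of standard basis vectors it contains; it must then contain $b := d - s$ copies of $-\tfrac{1}{d}\mathbf{1}$. Splitting $s(U)$ coordinate by coordinate (each ``chosen'' index contributes $1 - b/d$, each other index contributes $-b/d$) gives
\[
\|s(U)\|_1 = s\bigl(1 - \tfrac{b}{d}\bigr) + (d-s)\tfrac{b}{d} = d - \tfrac{2 s(d-s)}{d}.
\]
Since $s(d-s) \le d^2/4$, this forces $\|s(U)\|_1 \ge d/2$ for every such $U$, whence $T(B_1^d, d) \ge d/2$ and the lower bound follows.

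The entire argument hinges on choosing the right example; once $V$ is written down, the verification is the elementary quadratic optimisation above, and I expect no hidden obstacle. The conceptual point is merely the observation that pairing the simplex vertices $e_i$ against $d$ scaled copies of their centroid $-\tfrac{1}{d}\mathbf{1}$ already saturates the general bound from Theorem~\ref{thm1} (exactly so when $d$ is even; with a tiny slack when $d$ is odd, since then $s(d-s) \le (d^2-1)/4$).
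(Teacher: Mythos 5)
Your proof is correct and uses exactly the same example as the paper: the $d$ standard basis vectors together with $d$ copies of $-\tfrac{1}{d}\mathbf{1}$, tested at $k=d$, followed by the same elementary optimisation over the split parameter. The paper writes the norm as $\tfrac{1}{d}(p^2+(d-p)^2)$ and yours as $d-\tfrac{2s(d-s)}{d}$, which are identical, and both note the slight improvement to $\tfrac d2+\tfrac{1}{2d}$ when $d$ is odd.
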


\begin{theorem}\label{ell2}
$\frac 12 \sqrt {d+2} \le T(B_2^d) \le \frac {1 + \sqrt 5} 2\sqrt d$
\end{theorem}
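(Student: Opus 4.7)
For the lower bound $T(B_2^d)\geq\tfrac12\sqrt{d+2}$, I would take $V$ to be the $d+1$ vertices of a regular simplex inscribed in the unit sphere of $\R^d$: $\|v_i\|=1$, $\sum_i v_i=0$, and $\langle v_i,v_j\rangle=-1/d$ for $i\neq j$. A direct expansion shows that for every $k$-subset $U$,
\[
\|s(U)\|^2 = k + 2\binom{k}{2}\!\left(-\tfrac1d\right) = \frac{k(d-k+1)}{d},
\]
which depends only on $|U|=k$. Choosing $k=\lfloor(d+1)/2\rfloor$ maximizes this, yielding $\|s(U)\|^2\geq (d+2)/4$ (checking both parities of $d$), whence the claimed lower bound.

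For the upper bound $T(B_2^d)\leq\phi\sqrt{d}$ with $\phi=(1+\sqrt5)/2$, let $U^*$ minimize $\|s(U)\|$ over $k$-subsets, and set $s=s(U^*)$, $R=\|s\|$, $e=s/R$. Decompose every $v\in V$ as $v=\lambda(v)\,e+v^\perp$ with $\lambda(v)=\langle e,v\rangle\in[-1,1]$ and $\|v^\perp\|^2\leq 1-\lambda(v)^2$. Local optimality gives $\|u-v\|^2\geq 2\langle s,u-v\rangle$ for every $u\in U^*$, $v\notin U^*$; together with $\|u^\perp-v^\perp\|^2\leq 2\|u^\perp\|^2+2\|v^\perp\|^2$ this implies the Euclidean-flavored inequality
\[
2R\bigl(\lambda(u)-\lambda(v)\bigr)+\bigl(\lambda(u)+\lambda(v)\bigr)^2\leq 4 \qquad(\star)
\]
for every such pair. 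Note that $R^2\leq R\sqrt d+d$ is equivalent to $R\leq\phi\sqrt d$ by the quadratic formula, and since $\phi$ is the positive root of $x^2=x+1$, this is precisely where the golden ratio enters.

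The plan is to deduce $R\leq\phi\sqrt d$ by induction on $d$, combining $(\star)$ with the already-established Theorem~\ref{thm1}. The base case $d=1$ is Theorem~\ref{thm1} itself. For the inductive step, the inductive hypothesis applied to the projected set $\{v^\perp\}\subset B_2^{d-1}$ (which still sums to zero) produces a $k$-subset $U$ with $\|\sum_{u\in U} u^\perp\|\leq\phi\sqrt{d-1}$, while Theorem~\ref{thm1} in dimension~$1$ applied to $\{\lambda(v)\}\subset[-1,1]$ produces a $k$-subset with $|\sum_{u\in U}\lambda(u)|\leq 1$. If one can produce a single $U$ satisfying \emph{both} conclusions, then $\|s(U)\|^2\leq 1+\phi^2(d-1)=\phi^2 d-\phi<\phi^2 d$ (using $\phi^2=\phi+1$), and minimality of $U^*$ closes the induction via $R\leq\|s(U)\|<\phi\sqrt d$.

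The main obstacle is exactly this joint selection: each of Theorem~\ref{thm1} and the inductive hypothesis is purely existential, and a priori they produce different subsets. Naively summing $(\star)$ over all pairs $(u,v)\in U^*\times(V\setminus U^*)$ yields only the dimension-free bound $R^2\leq 2k(n-k)/n$, so something finer is required. I expect the joint selection to go through either by strengthening the induction to track a sufficiently rich family of $k$-subsets witnessing the $(d-1)$-dimensional bound — large enough to contain one also respecting the 1-dimensional constraint — or by using $(\star)$ to show that the $(d-1)$-dimensional witness cannot deviate too much in the $e$-direction before $U^*$'s minimality is contradicted, with the golden-ratio slack absorbing the inevitable mismatch between the two directions that $(\star)$ quantifies.
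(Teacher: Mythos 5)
Your lower bound is the same as the paper's: take the vertices of a regular simplex inscribed in $B_2^d$, compute $\|s(U)\|^2 = k(d-k+1)/d$, and optimise over $k$. That part is fine.

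Your upper bound, however, has a genuine gap, and you have identified it yourself. The swap inequality $(\star)$ coming from local optimality of $U^*$ is correct, and the decomposition $v = \lambda(v)e + v^\perp$ is a sensible idea, but the inductive step hinges on finding a \emph{single} $k$-subset $U$ that simultaneously satisfies the $(d-1)$-dimensional bound $\|\sum_{u\in U}u^\perp\|\le\phi\sqrt{d-1}$ and the $1$-dimensional bound $|\sum_{u\in U}\lambda(u)|\le 1$. Each of the two statements you invoke is a bare existence claim with possibly disjoint sets of witnesses, and nothing in your argument forces a common witness; the two speculative remedies you mention (tracking a rich family of witnesses, or absorbing the $e$-direction mismatch via $(\star)$) are not carried out and are not obviously repairable. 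As you note, the only unconditional consequence of $(\star)$ obtained by averaging is $R^2\le 2k(n-k)/n$, which is $n$-dependent and useless here. So the proposal does not establish the upper bound.

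For comparison, the paper's proof takes a completely different route that avoids any joint-selection issue. One starts from a vertex $\al$ of the polytope $P(V,k)$; at most $d+1$ of its coordinates are fractional. A discrepancy-theoretic rounding lemma (Beck's Lemma~\ref{parall}) rounds those fractional coordinates to $\{0,1\}$ while perturbing the weighted sum by at most $\sqrt d/2$. This produces a subset $W$ with $\|s(W)\|\le\sqrt d/2$ and $\big||W|-k\big|\le d$. Then a greedy Steinitz-type ordering lemma (Lemma~\ref{order}) lets one delete $h=|W|-k\le d$ vectors from $W$ while controlling the deleted partial sum by $\sqrt{\sigma^2+h}\le\sqrt{5d/4}$, giving $\|s(U)\|\le\sqrt d/2+\sqrt{5d}/2=\frac{1+\sqrt5}{2}\sqrt d$. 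The golden-ratio constant thus arises as $\tfrac12+\tfrac{\sqrt5}{2}$, not from $\phi^2=\phi+1$ as your intended induction would have required.
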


\begin{theorem}\label{ellinf}
$\frac 13 {\sqrt d} \le T(B_{\infty}^d) \le O(\sqrt d)$
\end{theorem}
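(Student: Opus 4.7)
\emph{Proof sketch for Theorem \ref{ellinf}.} I treat the lower and upper bounds separately.

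\emph{Lower bound $T(B_\infty^d)\ge\frac{1}{3}\sqrt d$.} The plan is to construct an explicit $V\subset B_\infty^d$ with $s(V)=0$ and a choice of $k$ such that every $k$-subset $U$ satisfies $\|s(U)\|_\infty\ge\frac{1}{3}\sqrt d$. A natural candidate is to take the columns of a $d\times n$ matrix with $\pm1$ entries whose row sums vanish (for instance a suitably corrected random sign matrix, or an explicit Hadamard-type construction). For $k\approx n/2$ such a matrix has $\ell_\infty$ subset-discrepancy $\Omega(\sqrt d)$ by a standard concentration / counting argument, and the explicit constant $\tfrac13$ then follows by fine-tuning the construction.

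\emph{Upper bound $T(B_\infty^d)\le O(\sqrt d)$.} Let $V=\{v_1,\ldots,v_n\}\subset B_\infty^d$ with $s(V)=0$ and $k\le n$. Passing to the complement $V\setminus U$ if necessary, I assume $k\le n/2$. The uniform fractional selection $x^*=(k/n)\mathbf{1}\in[0,1]^n$ satisfies $\sum_i x^*_i v_i=0$, so the goal reduces to rounding $x^*$ to $x\in\{0,1\}^n$ with $\sum_i x_i=k$ and $\bigl\|\sum_i x_iv_i\bigr\|_\infty=O(\sqrt d)$. I would do this in two stages. \emph{Stage 1:} move $x^*$ along the polytope
\[
P=\bigl\{x\in[0,1]^n:\textstyle\sum_i x_i=k,\;\sum_i x_iv_i=0\bigr\}
\]
to a vertex $x_0$. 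Since $P$ is cut out by $d+1$ linear equalities, $x_0$ has at most $d+1$ fractional coordinates while the remaining ones are already in $\{0,1\}$. \emph{Stage 2:} round the fractional coordinates of $x_0$ to $\{0,1\}$ while exactly preserving $\sum_i x_i=k$ and controlling the $\ell_\infty$ change in $\sum_i x_iv_i$. This is a linear discrepancy problem on a $d\times m$ matrix with $m\le d+1$ and entries in $[-1,1]$, under one extra equality constraint. By Spencer's ``six standard deviations suffice'' theorem (in its $[-1,1]$-valued variant, applicable because $m$ and $d$ are comparable), the rounding can be carried out with $\ell_\infty$ discrepancy $O(\sqrt m)=O(\sqrt d)$, yielding the desired bound.

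\emph{Main obstacle.} The delicate step is Stage 2: one must \emph{exactly} preserve $\sum_i x_i=k$ while applying a partial-coloring / signing argument that normally produces unconstrained $\pm1$ colorings. A natural workaround is to pair up the fractional coordinates so that each sign choice preserves cardinality, reducing to a standard discrepancy question of roughly half the dimension. Avoiding an extra $\sqrt{\log d}$ factor---the typical loss in $\ell_\infty$ from a direct Banaszczyk-type application---requires the low-dimensional structure (at most $d+1$ fractional coordinates) produced by Stage 1.
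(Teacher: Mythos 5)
Your Stage~1 (pass to a vertex of the polytope to isolate at most $d+1$ fractional coordinates) and your use of Spencer's theorem for the rounding are exactly the paper's starting point (the paper's Lemma~3 is the Spencer/Gluskin parallelotope statement). The genuine gap is the one you yourself flag as the ``main obstacle,'' and your proposed workaround does not close it. Pairing fractional coordinates and making one opposite $\{0,1\}$ decision per pair forces exactly half of them to become~$1$, so it only produces a set of cardinality $m/2$ (with $m=|Q|$); the actual integer target $q=\sum_{v\in Q}\alpha(v)$ can be anything in $\{1,\dots,m-1\}$, and the $\alpha(v)$ are not complementary within a pair, so the per-pair choice is not a clean $\pm$-shift either. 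As stated, the pairing step has no mechanism to hit the prescribed cardinality.

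The paper resolves this by \emph{not} preserving cardinality during the rounding. It first builds a set $W$ (via Lemma~3) with $\|s(W)\|_\infty=O(\sqrt d)$ but with $|W|$ off from $k$ by up to $O(d)$, and then repairs the cardinality using a separate Steinitz-type ordering statement, Lemma~4: any $W\subset B_\infty^d$ with $|W|\le 5d$ and $\|s(W)\|_\infty=O(\sqrt d)$ admits an ordering whose partial sums are all $O(\sqrt d)$, so one can peel off $|W|-k$ elements one at a time while staying $O(\sqrt d)$. Lemma~4 is where the real work lies: it is proved through Chobanyan's transference theorem together with Spencer's balancing-vectors-in-the-max-norm theorem, machinery your sketch does not invoke. (The paper moreover proves the statement for all $k$ simultaneously by building a nested chain $A_1\subset A_2\subset\cdots$ of linear dependencies, which your single-shot rounding does not attempt.) A repair that stays closer to your outline would be to append the all-ones row to the discrepancy matrix before applying Spencer/LSV, which gives $|\sum\epsilon_i-q|=O(\sqrt d)$, and then flip $O(\sqrt d)$ further coordinates to hit $q$ exactly at an additional $O(\sqrt d)$ cost in $\ell_\infty$; but that is not the pairing argument you wrote. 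On the lower bound, the paper simply cites a Hadamard-matrix construction from~\cite{Ba_08}; your sketch points in the same direction but ``fine-tuning gives $\tfrac13$'' is an assertion, not an argument.
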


We mention that in Theorems~\ref{ell2} and \ref{ellinf} the order of magnitude is the same as the conjectured value of the Steinitz constant.

\medskip\noindent
{\bf Remark 1.} Note that there is a "complementary" symmetry here. Namely, for every $U \subset V$, $s(U)=-s(V\setminus U)$, hence $\|s(U)\|=\|s(V\setminus U)\|$, and the cases $k$ and $n-k$ are symmetric.  Hence, we may assume $k \le n/2$.

\medskip
When establishing Helly-type theorems for sums of vectors in a normed plane, B\'ar\'any and Jer\'onimo-Castro proved the following result \cite[Lemma~5]{Ba_Jer}, which matches our scheme: {\em Given $6$ vectors in the unit ball of a normed plane whose sum is 0, there always exist $3$ among them, whose sum has norm at most 1.} In fact, this statement served as the starting point for our current research. An application of  Theorem~\ref{thm1} implies an extension of one of the Helly-type results \cite[Theorem 3]{Ba_Jer}, which we formulate slightly differently and prove in the last section.

\begin{theorem}\label{Helly}
Let $k \ge 2$ be a positive integer, and  $n = m (k-1) +1$ for some $m \ge 1$. Assume $B$ is the unit ball of a norm in $\R^2$, $V \subset B$ is of size $n$ and $\|s(V)\| \le 1$. Then $V$ contains a subset $W$ of size $k$ such that $\|s(W)\|\le 1$.
\end{theorem}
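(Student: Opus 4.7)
The plan is to induct on $m$. The base case $m = 1$ gives $n = k$, and we take $W = V$; by hypothesis $\|s(V)\| \le 1$.

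For $m \ge 2$, I augment $V$ minimally to make it eligible for Theorem~\ref{thm1}. Set $s := s(V)$ and adjoin the single corrector $u := -s$; since $\|s\| \le 1$, the vector $u$ lies in $B$, and $V' := V \cup \{u\}$ consists of vectors in $B$ with $s(V') = 0$ and $|V'| = n+1$. Applying Theorem~\ref{thm1} in the plane, where $\lceil d/2 \rceil = 1$, with target size $k$, I obtain $U' \subset V'$ with $|U'| = k$ and $\|s(U')\| \le 1$.

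Now I split on whether $u \in U'$. If $u \notin U'$, then $U' \subset V$ has size $k$ and $\|s(U')\| \le 1$, so $W := U'$ works. If $u \in U'$, write $U' = \{u\} \cup Y$ with $Y \subset V$ and $|Y| = k-1$; the bound $\|s(U')\| \le 1$ then reads $\|s(Y) - s\| \le 1$. Setting $V'' := V \setminus Y$, I get $|V''| = n - (k-1) = (m-1)(k-1) + 1$ and $\|s(V'')\| = \|s - s(Y)\| \le 1$, so $V''$ satisfies the hypotheses of the theorem with $m-1$ in place of $m$. Induction then supplies a $k$-subset $W \subset V'' \subset V$ with $\|s(W)\| \le 1$.

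The main conceptual hurdle is spotting the right augmentation: Theorem~\ref{thm1} demands that the whole set sum to zero, whereas we are only given $\|s(V)\| \le 1$. Using many small correctors would be of no help, because the subset extracted by Theorem~\ref{thm1} could contain some but not all of them, leaving an uncontrolled residual. Adjoining the \emph{single} vector $u = -s$ is permitted precisely by the hypothesis $\|s\| \le 1$, and produces a dichotomy in which either we win outright or we remove exactly $k-1$ elements from $V$, matching the arithmetic of the condition $n = m(k-1) + 1$ and driving the induction cleanly.
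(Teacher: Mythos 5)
Your proof is correct and essentially identical to the paper's: both induct on $m$, adjoin the single corrector $-s(V)$ to reduce to the zero-sum setting of Theorem~\ref{thm1}, and split on whether the corrector appears in the resulting $k$-subset, passing to the complement of size $(m-1)(k-1)+1$ in the bad case. No differences of substance.
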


\medskip

\section{Proof of Theorem 1}

We are to consider linear combinations $\sum_{v \in V}\al(v)v$ of the vectors in $V$. The coefficients $\al(v)$ form a vector  $\al \in \R^V$, the $|V|$--dimensional real vector space whose coordinates are indexed by the elements of $V$. Define the convex polytope
\[
P(V,k)=\Big\{\al \in \R^V: \sum_{v \in V}\al(v)v=0,\;\sum_{v \in V}\al(v)=k,\; 0\le \al(v)\le 1\;(\forall v\in V)\Big\}.
\]
$P(V,k)$ is non-empty as $\al(v) \equiv k/n$ lies in it (here $n=|V|$). From now on let $\al$ denote a fixed vertex of $P(V,k)$. The basic idea is to choose $U$ to be the set of vectors from $V$ that have the $k$ largest  coefficients $\al(v)$.  This works directly when $d$ is odd, and some extra care is needed for even $d$.

\medskip
We note first that $P(V,k)$ is determined by $d+1$ linear equations and $2n$ inequalities for the coefficients $\al(v)$, so at a vertex at most $d+1$ coefficients are strictly between $0$ and $1$. Define $U_1=\{v \in V: \al(v)=1\}$ and $Q=\{v \in V: 0<\al(v)<1\}$. Set $q=\sum_{v \in Q} \al(v)$, $q$ is an integer since $q+|U_1|=k$. Split now $Q$ into two parts, $E$ and $F$, so that $|E|=q$ and $E$ contains the vectors with the $q$ largest coefficients in $Q$, and $F$ the rest (ties broken arbitrarily). Then $U=U_1 \cup E$ has exactly $k$ elements and
\begin{eqnarray*}
s(U)&=&\sum_{v \in U_1}v+ \sum_{v \in E}v\\
    &=&\sum_{v \in V}\al(v)v+\sum_{v \in E}(1-\al(v))v -\sum_{v \in F}\al(v)v.
\end{eqnarray*}
Here $\sum_{v \in V}\al(v)v=0$, so by the triangle inequality
\[
||s(U)||\le \sum_{v \in E}(1-\al(v)) +\sum_{v \in F}\al(v).
\]

\medskip
The average of the coefficients in $Q$ is $a:=q/|Q|$. Thus, the average of the coefficients is at least $a$ in $E$, and it is at most $a$ in $F$. Consequently, the last sum is maximal when $\al(v)=a$ for all $v \in Q$:
\begin{equation*}\label{thm1_2}
||s(U)||\le q(1-a) +(|Q|-q)a=\frac 2{|Q|}\, q \,(|Q|-q) \le \frac {|Q|}2\,.
\end{equation*}
This finishes the proof when $d$ is odd as $|Q| \le d+1$, and also when $d$ is even and $|Q| \le d$.

\medskip
We are left with the case when $d$ is even and $|Q|=d+1$. The vectors in $Q$ are linearly dependent, so there is a non-zero $\be \in \R^V$ with $\be(v)=0$ when $v \notin Q$ such that $\sum_{v \in Q}\be(v)v=0$. We can assume that $\sum_{v \in Q}\be(v)\le 0$. Then $\sum_{v \in V}(\al(v)+t\be(v))v=0$ for every $t \in \R$. Choose $t>0$ maximal so that $0\le \ga(v)=\al(v)+t\be(v))\le 1$ for every $v \in V$. This means that, for some $v^* \in Q$, $\ga(v^*)=0$ or $1$.

\smallskip
Assume for the time being that $q \le (d+1)/2$.

\smallskip
Suppose first that $\ga(v^*)=0$. This time we split $Q^*: =Q \setminus v^*$ again into $E$ and $F$ so that $|E|=q$ and $E$ contains the vectors from $Q^*$ with the $q$ largest coefficients. Note that $\sum_{v \in Q^*}\ga(v) \le \sum_{v \in Q} \al(v)=q$ and that $|Q^*| =d$, so the average $a^*$ of $\ga(v)$ over $Q^*$ is at most $q/d$. We use again $U=U_1 \cup E$ and we have, the same way as before,
\[
||s(U)||\le \sum_{v \in E}(1-\ga(v)) +\sum_{v \in F}\ga(v).
\]
The right hand side is maximal again if every $\ga(v)$ equals their average $a^*$, hence
\[
||s(U)|| \le q(1-a^*)+(d-q)a^*=q+(d-2q)a^* \le q+(d-2q)\frac qd \le \frac d 2,
\]
because $d$ is even so $q \le (d+1)/2$ implies $2q \le d$. Thus, $||s(U)|| \le d/2$.

\medskip
The case when $\ga(v^*)=1$ is similar: this time $v^*$ is added to $U_1$, $Q^*=Q \setminus v^*$ is split into $E$ and $F$ with $|E|=q-1$  so that $E$ contains the vectors with the largest $q-1$ coefficients. Now $\sum_{v \in Q^*}\ga(v) \le \sum_{v \in Q} \al(v)-1=q-1$, and thus the average $a^*$ of $\ga(v)$ over $Q^*$ is at most $(q-1)/d$. As above, we are led to  the  inequality
\[
||s(U)|| \le (q-1)(1-a^*)+(d-(q-1))a^*=(q-1)+(d-2(q-1))a^*.
\]
Using that $d-2(q-1)\ge 0$ and $a^* \le (q-1)/d$,  we conclude that  $||s(U)||\le d/2 - 2/d < d/2$.

Finally we consider the case $q > (d+1)/2$. Since $s(U)=-s(V\setminus U)$, we may consider the complementary problem of finding $\widetilde{U} \subset V$ with $n-k$ elements so that $||s(\widetilde{U})||\le \lceil d/2 \rceil$. Clearly, $(1-\al(v))_{v \in V}$ is a vertex of $P(V,n-k)$, for which $\sum_{v \in Q}(1-\al(v))<(d+1)/2$. Thus, the previous argument may be applied to construct $\widetilde{U}$.
\qed

\bigskip
The same proof yields a stronger statement.

\begin{theorem}
Let $W \subset B$ finite. Suppose that for some $k \le |W|$ and $w_0 \in \mathrm{conv} \, W$, the vector $w_0$ may be expressed as $w_0 = \sum_{w \in W} \alpha(w) w$ with $\sum_{w \in W} \alpha(w) =1$ and $\alpha(w) \in [0, \frac 1 k]$ for every $w$.\footnote{In the original, printed version of the article, this condition on the coefficients is erroneously missing. We are indebted to Tung Nguyen for pointing out the mistake.} Then there is a subset $U \subset W$ of cardinality $k$, so that
\[
\| s(U)  - k w_0 \| \le \left \lceil \frac d 2 \right \rceil.
\]
\end{theorem}

The proof is the same as above, except that instead of the convex polytope $P(V,k)$, we consider the coefficient vectors $\alpha: W \rightarrow [0,1]$ satisfying
\[
\sum_{w \in W} \alpha(w) w = kw_0 \mbox{ and }\sum_{w \in W} \alpha(w) =k.
\]
The condition of the theorem ensures that this set is a non-empty convex polytope. The rest of the argument is unchanged.

\medskip\noindent
{\bf Remark 2.}  For later reference we record the fact that the linear dependence $\al$ defines the sets $U_1$ and $Q$, and if $|Q|=d+1$, then the new linear dependence $\ga$ defines $v^* \in Q$ and $Q^*$. Note that this works for even and odd $d$, we only need $|Q|=d+1$. For later use we define
\begin{equation}\label{AC}
A=\{v \in V: \ga(v)=1 \} \mbox{ and }C=\{v \in V: 0<\ga(v)<1\}.
\end{equation}

\medskip

\section{Proof of Theorem~\ref{thm2}}

We are going to use the following fact.  If the unit ball of a norm $\|.\|$ is the convex hull of the vectors $v_1\etc v_m, -v_1\etc -v_m \in \R^d$, then for every vector $x\in \R^d$,
\[
\|x\|=\min \Big\{\sum_1^m |a_i|\;:\sum_1^m a_iv_i=x\Big\}.
\]

Let $e_1,\ldots,e_d$ be the standard basis vectors of $\R^d$, and set $e_0=-\sum_1^d e_i$. We define $V$ to be $s$ copies of $\{e_0,e_1,\ldots,e_d\}$, where $s\ge 1$ is an integer. The unit ball is set to be $B=\conv \{V,-V\}$. Let $k<n=s(d+1)$ be a positive integer congruent to $\left \lceil \frac d2\right \rceil \mod (d+1)$. We claim that for every $k$-element subset $U$ of $V$, $\|s(U)\| \ge \left \lceil \frac d 2 \right\rceil$.

\medskip
Assume that $U$ contains $b_i$ copies of $e_i$ for every $i$, so $k=\sum_0^d b_i$. We have to estimate the norm of the vector $v = \sum_0^d b_i e_i$. Assume that
\[
v = \sum_0^d a_i e_i
\]
for some $a_i \in \R$. Then $\sum _0^d (b_i - a_i) e_i=0$. Since the only linear dependence of the vectors $e_0\etc e_d$ is $x \sum_0^d e_i =0$ for some constant $x \in \R$, we obtain that  $a_i = b_i -x$ for every $i$. Set
\[
f(x) := \sum_0^d |b_i-x|,
\]
Then $\|v\|= \min f(x)$ by the fact from the beginning of this section. We are going to estimate $f(x)$. Since $b_i \in \Z$ for every $i$, the function $f(x)$ is piecewise linear on $\R$ (it is affine on all intervals $(q,q+1)$ for $q \in \Z$). Therefore, there exists $c \in \Z$ so that the minimum of $f(x)$ is attained at $c$.

\medskip
The facts $k=\sum_0^d b_i \equiv \lceil d/2 \rceil \mod (d+1)$ and $c \in \Z$ imply that $\sum_0^d (b_i-c) \equiv \lceil d/2 \rceil \mod (d+1)$. Thus,
\[
\left\lceil \frac d 2 \right\rceil \le \Big|\sum_0^d (b_i-c)\Big| \le \sum_0^d |b_i-c|,
\]
hence, $\| v \| \ge \lceil d/2 \rceil$.

\medskip
We show next that $T(B,k)=k$ when $1\le k < \lceil d/2 \rceil$. The unit ball $B$ is the same as above and $V=\{e_0\etc e_d\}$. Assume $U \subset V$ with $|U|=k$ and $\|s(U)\| <k$. Add $\lceil d/2 \rceil-k$ vectors from $V \setminus U$ to $U$ to obtain a subset $W$ of $\lceil d/2 \rceil$ elements. Every addition increases the norm of the sum by at most one (because of the triangle inequality), so we get $\|s(W)\| \le  \|s(U)\|+\lceil d/2 \rceil-k < \lceil d/2 \rceil$, contrary to what was established above. Thus $T(B,k) \ge k$, while $T(B,k)\le k$ follows from the triangle inequality. \qed

\medskip
Further examples showing $T(B,k)= \lceil d/2 \rceil$ will be given in the next section.

\bigskip\noindent
{\bf Remark 3.} We mention that for large enough $n$, there is no vector set that works simultaneously for all $k$ with $d/2 \le k \le n-d/2$. This follows from Steinitz's theorem: let $v_1\etc v_n$ be the ordering where all partial sums lie in $dB$. Then necessarily two partial sums, with at least $d/2$ summands whose cardinalities differ by at least $d/2$, are close to each other: a standard volume estimate shows that their distance is bounded above by $4dn^{-1/d}$. Then their difference, which is a $k$-sum with some  $d/2 \le k \le n-d/2$, must be small.

%\medskip

\section{The $\ell_1$ norm, proof of Theorem \ref{ell1}}

The upper bound follows from Theorem \ref{thm1}. For the lower bound let $V$ consist of $e_1\etc e_d$ and $d$ copies of $\frac 1d e_0$ (with the same notation as in the previous section). Assume $U \subset V$ has exactly $d$ elements. If $U$ contains $p$ vectors out of $e_1\etc e_d$, then $s(U)$ has $p$ coordinates equal to $\frac pd$ and $d-p$ coordinates equal to $\frac pd-1$. Thus $\|s(U)\|_1=\frac 1d(p^2+(d-p)^2)$. The last expression is minimal when $p=\lfloor \frac d2\rfloor$. The minimum equals $\frac d2$ when $d$ is even and $\frac d2 + \frac 1{2d}$ when $d$ is odd. This is slightly better (for $d$ odd) than the stated lower bound. \qed

\bigskip
This example shows that $T(B_1^d)=T(B_1^d,d) = d/2$ for even $d$. A small modification gives further examples implying $T(B_1^d, k) = d/2$ for even $d$ and for all $k\ge d$. Namely, given $d\ge 1$ and $k\ge d$, let $V$ consist of the vectors $e_1, \ldots, e_d$, and $2k-d$ copies of $\frac 1{2k-d}e_0$.  Then $V\subset B_1^d$ and  $s(V) =0$. It is not hard to check that this shows $T(B_1^d, k) = d/2$ for every $k \ge d$ ($d$ is even).

%\medskip

\section{The $\ell_2$ norm, proof of Theorem \ref{ell2}}

In this section, $\|.\|$ stands for the Euclidean  norm. For the upper bound we will need two lemmas. The first is Lemma 2.2 in Beck's paper~\cite{beck}. A similar result is given in \cite[Theorem 4.1]{Ba_08}. The second is a Steinitz type statement.

\begin{lemma}\label{parall} Let $Q \subset B_2^d$ be finite, and $\al:Q \to[0,1]$.
Then there exists $\eps: Q \to \{0,1\}$ such that
$\|\sum_{v\in Q} (\eps(v)-\al(v))v\|\le \sqrt d/2$.
\end{lemma}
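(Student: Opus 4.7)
The plan is a two-step argument combining a linear-algebraic reduction with a probabilistic second-moment rounding. Call $v \in Q$ \emph{floating} if $\alpha(v) \in (0,1)$, and \emph{fixed} otherwise.

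\smallskip
\textbf{Step 1 (reducing to at most $d$ floating coordinates).} I would adjust $\alpha$ while preserving $\sum_{v \in Q}\alpha(v)v$. Whenever the number of floating coordinates exceeds $d$, the linear map $\beta \mapsto \sum_{v}\beta(v)v$, restricted to functions supported on the floating set, has a nonzero kernel vector $\beta$ by a dimension count (its target $\R^d$ has dimension smaller than the number of floating coordinates). Moving $\alpha \to \alpha+t\beta$ with $t$ of largest modulus keeping $\alpha \in [0,1]^Q$, at least one further coordinate reaches $0$ or $1$, while the sum $\sum \alpha(v) v$ is unchanged since $\beta$ is in the kernel. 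Iterating yields $\alpha' \colon Q \to [0,1]$ with $\sum \alpha'(v)v = \sum \alpha(v)v$ and with at most $d$ floating coordinates.

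\smallskip
\textbf{Step 2 (random rounding on what remains).} For each fixed $v$ put $\epsilon(v):=\alpha'(v) \in \{0,1\}$; for each floating $v$ set $\epsilon(v)=1$ independently with probability $\alpha'(v)$ and $\epsilon(v)=0$ otherwise. By independence and $\|v\|\leq 1$,
\[
\mathbb{E}\,\Big\|\sum_{v \in Q}(\epsilon(v)-\alpha'(v))v\Big\|^2 \;=\; \sum_{v \text{ floating}} \alpha'(v)(1-\alpha'(v))\|v\|^2 \;\leq\; \tfrac{1}{4}\cdot d,
\]
since $\alpha'(v)(1-\alpha'(v)) \leq 1/4$ and at most $d$ coordinates float. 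Therefore some realization of $\epsilon$ satisfies $\|\sum_{v}(\epsilon(v)-\alpha'(v))v\| \leq \sqrt{d}/2$. Because $\alpha-\alpha'$ was constructed to lie in the kernel of $\beta \mapsto \sum \beta(v)v$, the sums $\sum(\epsilon(v)-\alpha(v))v$ and $\sum(\epsilon(v)-\alpha'(v))v$ coincide, and the bound transfers to the original $\alpha$.

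\smallskip
\textbf{Main obstacle.} The only delicate point is confirming that Step 1 terminates with at most $d$ floating coordinates: each iteration strictly decreases the floating count because translation of $\alpha$ along a nonzero direction $\beta$ inside the compact box $[0,1]^Q$ must reach the boundary at a finite value of $t$. Once we are down to $\leq d$ floating coordinates, the variance estimate above is immediate, and the existence of a suitable deterministic rounding follows from the first-moment principle. Nothing deeper than linear algebra and a one-line probabilistic averaging is needed.
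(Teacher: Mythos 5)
The paper does not prove this lemma itself; it cites Beck~\cite{beck} (Lemma~2.2) and B\'ar\'any~\cite{Ba_08} (Theorem~4.1). Your argument is correct and self-contained, and is essentially the standard proof of such a statement. Step~1 is exactly the observation that one may pass to a vertex of the polytope $\{\beta\in[0,1]^Q:\sum_v\beta(v)v=\sum_v\al(v)v\}$, at which at most $d$ coordinates lie strictly between $0$ and $1$ --- the same vertex mechanism the authors use for $P(V,k)$ in the proof of Theorem~1. Termination of your iteration is clear: the kernel vector $\beta$ is nonzero and supported on the floating set, so the admissible $t$-interval inside $[0,1]^Q$ is a bounded interval containing $0$, and at an endpoint at least one additional coordinate becomes integral, strictly decreasing the floating count. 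Step~2 is a correct second-moment computation: the cross-terms vanish by independence, each floating coordinate contributes $\al'(v)(1-\al'(v))\|v\|^2\le 1/4$, and there are at most $d$ of them, giving
\[
\mathbb{E}\Big\|\sum_{v\in Q}(\eps(v)-\al'(v))v\Big\|^2\le \frac d4,
\]
so some realization achieves $\le\sqrt d/2$. The final transfer back to $\al$ is valid because $\sum_v\al(v)v=\sum_v\al'(v)v$ throughout Step~1. Nothing is missing.
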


\begin{lemma}\label{order} Assume that $V \subset B_2^d$ is a finite set and $\|s(V)\|=\sigma$. Then there exists an ordering $v_1,\ldots,v_n$ of the elements of $V$, such that, for all $h\le n$,
\[
\Big\|\sum_1^h v_i \Big\| \le \sqrt{\sigma^2+h}.
\]
\end{lemma}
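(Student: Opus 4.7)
The plan is to use a greedy ordering: set $s_0 = 0$ and, having chosen $v_1,\ldots,v_{h-1}$, pick $v_h$ from the remaining set $R_h := V\setminus\{v_1,\ldots,v_{h-1}\}$ so as to minimize $\|s_{h-1}+v_h\|^2$. I will establish $\|s_h\|^2 \le \sigma^2+h$ by induction on $h$; the base case $h=0$ is immediate since $s_0=0$.

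For the inductive step, let $m = |R_h|=n-h+1$. The greedy choice guarantees that $\|s_h\|^2$ does not exceed the average of $\|s_{h-1}+v\|^2$ over $v\in R_h$. Expanding the squared norm, using $\sum_{v\in R_h} v = s(V)-s_{h-1}$ together with $\|v\|\le 1$ for each $v\in R_h$, and applying Cauchy--Schwarz to bound $\langle s_{h-1}, s(V)\rangle$, I expect to reach an inequality of the form
\[
\|s_h\|^2 \;\le\; \|s_{h-1}\|^2\left(1-\frac{2}{m}\right) + \frac{2\sigma\|s_{h-1}\|}{m} + 1.
\]
A quick derivative check shows that for $m\ge 2$ the right-hand side is monotone increasing in $B:=\|s_{h-1}\|^2$, so it suffices to substitute the worst-case inductive hypothesis $B = \sigma^2+h-1$; a short algebraic simplification then reduces the desired bound $\|s_h\|^2 \le \sigma^2+h$ to the elementary inequality $\sigma\sqrt{\sigma^2+h-1}\le \sigma^2+h-1$, which holds for every $h\ge 1$.

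The only step that needs separate attention is the degenerate case $m=1$, where the last remaining vector is forced and $s_h=s(V)$ has norm exactly $\sigma$, so the claim holds trivially. I do not anticipate a serious obstacle; the main thing to get right is the direction of the Cauchy--Schwarz step, which works out favorably because the $-\|s_{h-1}\|^2$ coming from $\langle s_{h-1}, s(V)-s_{h-1}\rangle$ enters with the correct sign and, combined with the $+1$ from the $\|v\|^2$ average, exactly balances the quadratic growth allowed by the target bound.
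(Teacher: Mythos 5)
Your proof is correct and follows essentially the same approach as the paper: both build the ordering step by step and, at each step, exploit the identity $\sum_{v\in R_h} v = s(V) - s_{h-1}$ to show some remaining vector keeps the partial sum within the target bound. The only difference is one of execution: the paper shows the existence of a suitable $v_h$ by a three-way case analysis on $\|s_{h-1}\|$, whereas you take the minimizer and bound $\|s_h\|^2$ by the average of $\|s_{h-1}+v\|^2$ over $R_h$, collapsing the cases into a single monotonicity-plus-algebra computation.
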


\begin{proof} Choose $v_1 \in V$ arbitrarily. For $h \ge 2$, we select $v_h$ inductively. We set $S_h= \sum_1^h v_i$. Assume that $\|S_{h-1}\| \le \sqrt{\sigma^2 + h-1}$, and set $W = V \setminus \{ v_1, \ldots, v_{h-1} \}$. We consider three cases.

\smallskip
{\bf Case 1.} If $\|S_{h-1}\| \le \sigma-1$, then choose $v_h \in W$ arbitrary: $\|S_h\|\le \sigma$ holds  by the triangle inequality.

\smallskip
{\bf Case 2.} If $\|S_{h-1}\|\ge \sigma$, then by the assumption $\|S\| = \sigma$, there exists a vector $v_h \in W$, for which $\langle S_{h-1}, v_h \rangle \le 0$. Therefore,
\[
\| S_h \|^2 = \|S_{h-1} + v_h\|^2 \le \|S_{h-1}\|^2 + \|v_h\|^2 \le  (\sigma^2 + h-1 )+ 1 = \sigma^2 + h.
\]

\smallskip
{\bf Case 3.} If $\sigma -1 < \|S_{h-1}\| < \sigma$, define $\varepsilon= \sigma-\|S_{h-1}\|$, so $0< \varepsilon < 1$ and $\eps \le \sigma$. Then
\[
\sum_{v \in W} \langle v, S_{h-1} \rangle = \langle S_h - S_{h-1}, S_{h-1} \rangle  \le \sigma(\sigma - \varepsilon) - (\sigma - \varepsilon)^2 = \varepsilon (\sigma - \varepsilon).
\]
Thus, there exists $v_h \in W$, for which $ \langle v_h, S_{h-1} \rangle \le  \varepsilon (\sigma - \varepsilon)$. Then
\begin{align*}
\| S_h \|^2 &= \|S_{h-1} + v_h\|^2 \le (\sigma - \varepsilon)^2 + 2\, \varepsilon (\sigma - \varepsilon) + 1\\ &= \sigma^2 + 1 - \varepsilon^2<\sigma^2 +h.
\qedhere
\end{align*}
\end{proof}

\begin{proof}[Proof of Theorem~\ref{ell2}] For the lower bound let $V$ be the set of vertices of a regular simplex inscribed in $B_2^d$. Then $s(V)=0$. Let $U\subset V$ have $\left \lceil \frac d2 \right \rceil$ elements. A routine computation shows that $\|s(U)\|$ equals $\frac {\sqrt{d+2}}{2}$ when $d$ is even and $\frac {d+1}{2\sqrt d} > \frac {\sqrt{d+2}}{2}$ when $d$ is odd. This implies the lower bound $T(B_2^d) \ge \frac {\sqrt {d+2}}2$.

\smallskip
For the upper bound we have to prove the existence of $U \subset V$ with $|U|=k$ and $\|s(U)\| \le \frac{1+\sqrt 5}2 \sqrt d$. From the proof of Theorem 1 recall the definition of $P(V,k)$ and its vertex $\al \in \R^V$ and $U_1=\{v\in V:\al(v)=1\}$ and $Q=\{v \in V: 0<\al(v)<1\}$. Here $|Q|\le d+1$.

\smallskip
If $|Q|=0$, then $|U_1|=k$ and $s(U_1)=0$, so we can set $U=U_1$. The case $|Q|=1$ is impossible because the sum of all $\alpha(v)$ is an integer. From now on we assume that $2 \le |Q|$ implying
$|U_1|+1\le k \le |U_1|+|Q|-1$. Using Lemma 1 for $\alpha$ restricted to $Q$ we
find $\eps:Q \to \{0,1\}$ such that $\|\sum_{v\in Q} (\eps(v)-\alpha(v))v||\le \sqrt d/2$.

\smallskip
Define $W=U_1 \cup \{v \in Q : \eps(v)=1\}$, then $W$ has the properties that
$\|s(W)\| \le \sqrt d /2$ and $||W|-k|\le d$. Because of the complementary symmetry,
we can assume that $k\le |W| \le k+d$. Set $h =|W|-k$.
Then Lemma 2 applies to $W$: writing $\sigma=\|s(W)\|$  we have $\sigma \le \sqrt d/2$ and so the
elements of $W$ can be ordered as $w_1, w_2,\ldots$ so that $\|\sum_1^m w_i\|\le \sqrt{\sigma^2+m}$
for every $m$. In particular, with $m = h \le d$, $\|\sum_1^h w_i\|\le \sqrt{\sigma^2+h}\le \sqrt{d/4+d}$.
Then for $U= W\setminus \{w_1,...w_h\}$, we have $|U|=k$ and $\|s(U)\|\le \frac {1+\sqrt 5}2 \sqrt d$.
\end{proof}

%\medskip

\section{The $\ell_{\infty}$ norm, proof of Theorem \ref{ellinf}}

Here, $\|.\|$ denotes the maximum norm. We need two lemmas again, the first is similar to Lemma~\ref{parall}.

\begin{lemma}\label{parallinf} If $C \subset B_{\infty}^d$ consists of $d$ linearly independent vectors, then for every point $z$ of the parallelotope $P=\sum_{v \in C} [0,v]$, there is a vertex $u$ of $P$ with $\|z-u\|_\infty=O(\sqrt d)$.
\end{lemma}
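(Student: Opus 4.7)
The plan is to recast Lemma~\ref{parallinf} as a linear discrepancy problem. Let $M$ denote the $d\times d$ matrix whose columns are the $d$ vectors of $C$. Since the columns are linearly independent, $M$ is invertible; since each column lies in $B_\infty^d$, every entry of $M$ lies in $[-1,1]$. Each point $z\in P$ admits a unique representation $z=M\al$ with $\al\in[0,1]^d$, and the $2^d$ vertices of $P$ are precisely the points $M\eps$ with $\eps\in\{0,1\}^d$. The lemma therefore reduces to the assertion: for every $\al\in[0,1]^d$ there exists $\eps\in\{0,1\}^d$ with $\|M(\eps-\al)\|_\infty=O(\sqrt d)$, after which one sets $u=M\eps$.

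To produce such an $\eps$ I would invoke Spencer's ``six standard deviations suffice'' theorem in the form valid for $d\times d$ matrices with entries in $[-1,1]$: there always exist signs $\sigma\in\{-1,+1\}^d$ with $\|M\sigma\|_\infty=O(\sqrt d)$. Applied to every submatrix of $M$, this shows that the hereditary discrepancy of $M$ in the $\ell_\infty$ norm is $O(\sqrt d)$, and the Lovász--Spencer--Vesztergombi inequality $\mathrm{lindisc}(M)\le 2\,\mathrm{herdisc}(M)$ then yields $\mathrm{lindisc}(M)=O(\sqrt d)$. This produces the desired $\eps$, and the vertex $u=M\eps$ of $P$ satisfies $\|z-u\|_\infty=\|M(\eps-\al)\|_\infty=O(\sqrt d)$.

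The main obstacle is that this approach uses Spencer's theorem as a black box. A more self-contained approach, in the spirit of Beck's Lemma~\ref{parall} used earlier in the paper, would be an iterative partial-coloring argument: at stage $t$, with $d_t$ coordinates of the current vector $\al^{(t)}$ still fractional, one finds an update direction $\eta$ supported on the fractional coordinates with $\|M\eta\|_\infty=O(\sqrt{d_t})$ such that a constant fraction of these coordinates are frozen to $\{0,1\}$ after moving $\al^{(t)}$ maximally along $\eta$. Telescoping the displacements along the dyadic chain $d_t=d,\,d/2,\,d/4,\dots$ gives a geometric series still bounded by $O(\sqrt d)$. Establishing the existence of such a partial coloring is the genuinely nontrivial ingredient; it can be done either by an entropy argument à la Spencer or by a Gaussian-measure argument à la Giannopoulos or Banaszczyk, but in either case it is essentially a repackaging of Spencer's original idea, so citing the bound directly is the cleanest route and is what I would do in a first draft.
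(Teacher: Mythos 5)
Your proof is correct and, like the paper, ultimately rests on Spencer's ``six standard deviations suffice'' result, but you take a small detour to reach the stated form. The paper simply cites Corollary~8 of Spencer's paper, which already gives the $\{0,1\}$-rounding (linear discrepancy) bound directly: for any $d\times d$ matrix $M$ with entries in $[-1,1]$ and any $\alpha\in[0,1]^d$, there is $\varepsilon\in\{0,1\}^d$ with $\|M(\varepsilon-\alpha)\|_\infty\le 6\sqrt d$; the lemma then follows immediately once one observes, as you do, that a point of $P$ is $M\alpha$ and a vertex is $M\varepsilon$. You instead invoke the $\pm 1$ signed version of Spencer for every column-submatrix of $M$ to bound the hereditary discrepancy, and then pass through the Lov\'asz--Spencer--Vesztergombi inequality $\mathrm{lindisc}(M)\le 2\,\mathrm{herdisc}(M)$. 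This chain is valid (Spencer's theorem does hold for $[-1,1]$-entry matrices with at most as many columns as rows, so $\mathrm{herdisc}(M)=O(\sqrt d)$ is justified) and yields the same $O(\sqrt d)$ conclusion, just with a slightly worse constant, which is harmless here. The iterative partial-coloring sketch you offer as a self-contained alternative is also sound in outline, but as you yourself note it is essentially a re-derivation of Spencer's argument, so the direct citation is both cleaner and what the paper actually does.
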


This is a result of Spencer \cite[Corollary 8]{Spe_six}, and also of Gluskin \cite{Gl} whose work relies on that of Kashin \cite{Ka}. Spencer's proof gives the estimate $\|z-u\|\le 6\sqrt d$. The linear independence condition is only needed to ensure that $P$ is a parallelotope, and so its vertices are of the form $s(D)=\sum_{v \in D}v$ for some subset $D \subset C$.

The next statement is the (weaker) analogue of Lemma~\ref{order} for the $l_\infty$ norm. Note that we require the set $W$ to contain only a few vectors. The proof is longer and it uses Chobanyan's transference theorem (for the $\ell_{\infty}$ norm) so we postpone it to Section~\ref{stein}.

\begin{lemma}\label{lstein} Assume $W \subset B_{\infty}^d$, $|W|=m \le 5d$, and $\|s(W)\|_\infty=O(\sqrt d)$. Then there is an ordering $w_1\etc w_m$ of the vectors in $W$ such that
\[
\max_{h=1\etc m} \big\|\sum_1^h w_i \big\|_\infty=O(\sqrt d).
\]
\end{lemma}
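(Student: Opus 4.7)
The plan is to reduce the ordering problem to a signing (discrepancy) problem via Chobanyan's transference theorem, and then to solve the signing problem using Spencer's six-deviations theorem — the same underlying discrepancy result that drives Lemma~\ref{parallinf}. Since $|W|=m\le 5d$, we are precisely in the range where Spencer's bound on signed sums of vectors in $B_\infty^d$ is $O(\sqrt d)$, which matches the target order of magnitude.

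Concretely, I would first apply Spencer's theorem to obtain a signing $\eps\colon W\to\{-1,+1\}$ with
\[
\Bigl\|\sum_{v\in W}\eps(v)\,v\Bigr\|_\infty=O(\sqrt d).
\]
Then I would invoke Chobanyan's transference principle for the $\ell_\infty$ norm, which guarantees an ordering $w_1,\ldots,w_m$ of $W$ along which every partial sum $\sum_{i=1}^h w_i$ stays within $O(M)$ of the straight-line trajectory $h\mapsto (h/m)\,s(W)$, where $M$ is the signing bound from the first step. Combining $M=O(\sqrt d)$ with the hypothesis $\|s(W)\|_\infty=O(\sqrt d)$ yields $\max_h\|\sum_{i=1}^h w_i\|_\infty=O(\sqrt d)$.

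The main obstacle is to invoke Chobanyan sharply enough to avoid a logarithmic loss. A naive recursive halving — split $W$ into two pieces of near-equal signed sum via Spencer and recurse on each half — would only yield the weaker bound $O(\sqrt d\,\log m)$, since each of the $\Theta(\log m)$ recursion levels contributes an additive $O(\sqrt d)$. Chobanyan's transference, stated correctly, bypasses this loss in a single global rearrangement. A secondary technical wrinkle is the nonzero total sum: either one centers the vectors as $w_i-s(W)/m$ (which still lie in a bounded multiple of $B_\infty^d$) and applies the $s=0$ version of Chobanyan, or one adjoins a small correction vector to $W$; both adjustments affect the final bound only by $O(\sqrt d)$.
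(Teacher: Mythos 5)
Your overall strategy — Chobanyan transference plus a Spencer-type discrepancy bound, together with centering by $s(W)/m$ to reduce to the zero-sum case — is exactly the paper's approach, and the centering step is handled correctly (the centered vectors live in $2B_\infty^d$, and the translation back costs only $\|s(W)\|_\infty=O(\sqrt d)$). However, there is a genuine gap in the signing step. You invoke ``Spencer's six-deviations theorem — the same underlying discrepancy result that drives Lemma~\ref{parallinf}.'' That theorem (from \emph{Six standard deviations suffice}) bounds only the \emph{total} signed sum $\|\sum_{v\in W}\eps(v)v\|_\infty$. But Chobanyan's transference does \emph{not} take as input one signing with small final sum; it compares the Steinitz constant $S(B,n)$ with the sign-sequence constant $F(B,n)$, which by definition controls \emph{all prefix sums} $\max_{h}\|\sum_{i\le h}\eps_i v_i\|_\infty$ of signed sequences. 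So to conclude $S(B_\infty^d,m)=O(\sqrt d)$ from Chobanyan you must first establish $F(B_\infty^d,m)=O(\sqrt d)$, and the six-deviations theorem does not give you that: a signing can have a small total sum while some prefix sum is large. The paper instead cites a different theorem of Spencer, from \emph{Balancing vectors in the max norm}, which states precisely the prefix-sum bound $F(B_\infty^d,d)\le K\sqrt d$; that is the input Chobanyan needs. Once you have this, the extension to $m\le 5d$ goes by embedding $\R^d$ into $\R^m$ (with the $\ell_\infty$ ball preserved), which you gesture at but do not make explicit. In short: swap the six-deviations reference for Spencer's prefix-sum result and state the embedding step, and your argument becomes the paper's proof. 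As written, the worry you raise about the $\log m$ loss from recursive halving is not actually resolved, because with only a total-sum signing bound in hand Chobanyan cannot be applied.
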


\begin{proof}
[Proof of Theorem~\ref{ellinf}] The lower bound uses Hadamard matrices and is given in \cite{Ba_08}.

For the upper bound we assume, rather for convenience than necessity, that the set $V\subset \R^d$ is in general position, for instance, no $d$ vectors from $V$ are linearly dependent. The general case follows from this by a limit argument. We assume further that $|V|=n>5d$ since for $n\le 5d$ the result is a consequence of  Lemma~\ref{lstein}. Set $m=\lfloor n/(2d)\rfloor$.

We are going to define linear dependencies $\ga_i$, for $i=1,2\etc m-1$ so that the sets
\[
A_i=\{v \in V: \ga_i(v)=1\},\; C_i=\{v \in V: 0<\ga_i(v)<1\}
\]
satisfy the conditions
\[
A_i\subset A_{i+1},\; (2i-1)d\le |A_i| < h_i:=\sum_{v \in V}\ga_i(v)\le 2di,\; |C_i|=d.
\]

The construction is recursive and is similar to how $\al$ and $\ga \in \R^V$ were constructed. For $i=1$ we take an arbitrary vertex $\al$ of the convex polytope $P(V,2d)$, then $|Q|=d+1$ (because of the general position assumption) and $d \le |U_1| <2d$ follows. We construct $\ga$ as specified in Remark 2 and~\eqref{AC}. Then define $\ga_1=\ga$, set $A_1=\{v \in V: \ga_1(v)=1\},\; C_1=\{v \in V: 0<\ga_1(v)<1\}$. General position implies that $|C_1|=d$ and then $d\le |A_1|< h_1=\sum_{v\in V}\ga_1(v) \le 2d$.

\smallskip
Assume next that $\ga_1\etc \ga_i$ have been constructed ($1<i<m-1$), and the sets $A_j,C_j$ for $j\le i$ satisfy the required conditions. Define the convex polytope
\[
P_{i+1}= \left\{\al \in P(V,2d(i+1)): \al(v)=1\; (\forall v\in A_i)\right\}
\]
We check that $P_{i+1}$ is non-empty. As $|A_i|<h_i\le 2di$, the linear dependence $\al =\ga_i+t(\mathbf{1}-\ga_i)$ lies in $P_{i+1}$ for a suitable $t$, we only have to check that $0< t < 1$ as this implies $0\le \al(v)=\ga_i(v)+t(1-\ga_i(v))\le 1$. To fulfill the condition $\sum_{v \in V}\al(v)=2d(i+1)$, we must set
\[
t=\frac {2d(i+1)-h_i}{n-h_i}=1-\frac {n-2d(i+1)}{n-h_i}.
\]
Thus $0< t <1$ indeed as $h_i \le 2di$.

\smallskip
Next, let $\al_{i+1}$ be a fixed vertex of $P_{i+1}$. The method recorded in Remark 2 gives another linear dependence $\ga_{i+1}$ with
$|C_{i+1}|=d$. $A_i \subset A_{i+1}$ by the construction. All $v \in V$ with $\al_{i+1}(v)=1$ are in $A_{i+1}$, and there are at least $2d(i+1)-d$ of them. Thus $(2i+1)d \le |A_{i+1}|$. Further $|A_{i+1}|<h_{i+1}$ follows since $\ga_{i+1}(v)=1$ for every $v \in A_{i+1}$ and $h_{i+1}\le 2d(i+1)$ because $h_{i+1}=\sum_{v \in V}\ga_{i+1}(v)\le \sum_{v \in V}\al_{i+1}(v)= 2d(i+1)$.

\smallskip
The construction is almost finished, as a last step we define $A_0=C_0=\emptyset$.

\smallskip
We use Lemma~\ref{parallinf} next. The parallelotope $P:=\sum_{v \in C_i}[0,v]$ contains the point $-s(A_i)$, since $0=s(A_i)+\sum_{v \in C_i}\ga_i(v)v$. A vertex of $P$ is of the form $s(D)=\sum_{v \in D}v$, where $D$ is a subset of $C_i$. By Lemma~\ref{parallinf}, there is a $D_i\subset C_i$ such that the vertex $s(D_i)$ is at distance $O(\sqrt d)$ from $-s(A_i)$. Thus the vector $z_i=s(A_i\cup D_i)$ is short, namely, $\|z_i\|=O(\sqrt d)$. Note that by setting $D_0=\emptyset$, we have $z_0=0$ which is again of norm $O(\sqrt d)$.

For the next step of the proof we first check that the size of the symmetric difference $(A_{i+1}\cup D_{i+1})\triangle (A_i\cup D_i)$ is at most $5d$. This holds for $i=0$. For larger $i$, $D_{i+1}$ and $A_{i+1}$ are disjoint, and $A_{i+1}$ contains $A_i$, so the symmetric difference is the same a
$X \triangle D_i$, where $X=(A_{i+1}\setminus A_i)\cup D_{i+1}$. Here $|A_{i+1} \setminus A_i|<3d$, and both $D_i$ and $D_{i+1}$ have at most $d$ elements, which gives the upper bound $5d$.

Now $z_i-s(D_i)+s(X)=z_{i+1}$. Thus, adding at most $5d$ vectors from $B_{\infty}^d$ to $z_i$ one arrives at $z_{i+1}$, and both $z_i,z_{i+1}$ are short. Define
\[
W=\{-u: u \in D_i \setminus X\}\bigcup (X \setminus D_i).
\]
Then $W$ is a subset of $B_{\infty}^d$, of at most $5d$ elements, such that $s(W)=\sum_{w \in W}w=z_{i+1}-z_i$. Thus $\|s(W)\|=O(\sqrt d)$. By applying Lemma~\ref{lstein} to $W$ we get an ordering $w_1\etc w_m$ such that every partial sum along this ordering is $O(\sqrt d)$. Then for every $h=1\etc m$.
\[
\|z_i+\sum_1^h w_j \|\le \|z_i \| +\|\sum_1^h w_j \| =O(\sqrt d).
\]

In the original problem we have to show that for every $k \le n$ there is a set $U \subset V$ of size $k$ with $\|s(U)\|=O(\sqrt d)$. This is clear when $k$ equals the size of some $A_i\cup D_i$, but what is to be done for the other $k$? Well, such a $k$ lies between $|A_i\cup D_i|$ and $|A_{i+1}\cup D_{i+1}|$ for some $i$. Note that $z_i=s(A_i\cup D_i)$. Moreover, each sum $z_i+w_1+\ldots +w_h$ is the sum of vectors in a subset of $V$. This can be seen by induction on $h$. The case $h=0$ is clear. The induction step $h-1 \to h$ is clear again when $w_h$ does not come from $D_i$, simply one more term appears in the sum. If however $w_h$ comes from $D_i$, then it cancels the previous $-w_h$ that is a unique term in $s(A_i\cup D_i)$. So each partial sum is a subset-sum. The number of elements in these subsets increases or decreases by one when the next $w_h$ is added. Then for every $k$ between
$|A_i\cup D_i|$ and $|A_{i+1}\cup D_{i+1}|$ there is a partial sum containing exactly $k$ terms. \end{proof}

\medskip

\noindent
{\bf Remark 4.}
The above proof yields a slightly stronger statement: we construct a chain of subsets of $V$, each with sum of order of magnitude $O(\sqrt{d})$, so that the cardinality of two consecutive subsets differ by one, and the chain traverses from the empty set to $V$. We have hoped to give a better value for the Steinitz constant $S(B_2^d)$ or $S(B_{\infty}^d)$ by a suitable modification of the argument (we would need an {\em increasing} chain of subsets with the previous properties), but  our efforts have failed so far.
\medskip

\noindent
{\bf Remark 5.}
A simpler proof may be given if one only aims for the existence a $k$-element subset with small sum. We may assume that $k \le n-d$. Starting from a vertex of $P(v, k-d)$ and using Lemma~\ref{parallinf}, similarly to the proof of Theorem~\ref{ell2}, we can construct a set $W$ so that $\|s(W)\| \le 6\sqrt{d}$, and $k - 2d \le |W| \le k$. Let $\alpha$ be the characteristic function of $W$, i. e. $\alpha(v) = 1$ if $v \in W$, and 0 otherwise. Let $l = |W|$, and set $t$ so that $l + t(n - l) = k+d$. Then $t \le 1$.

Next, consider the set $P$ of the linear dependencies $\beta: V \rightarrow  [0,1]$ with
\[\sum_{v \in V} \beta(v) v = (1 - t) s(W),\  \sum_{v \in V} \beta(v) = k + d,\ \beta(v)=1 (\forall v \in W).
\]
Then $P$ is a non-empty convex polytope, since $\alpha + t (\mathbf{1} - \alpha)$ satisfies all the above conditions. Take an arbitrary a vertex of $P$. As before, invoking Lemma~\ref{parallinf}, we find a set $Y$ so that $\|s(Y) - (1 - t) s(W)\|_\infty = O(\sqrt{d})$, and $||Y|- (k+d)| \le d$. Furthermore, the construction implies that $W \subset Y$. Hence,
\[k - 2d \le |W| \le k \le |Y| \le k+ 2d,\]
and  $\|s(W)\| = O(\sqrt{d})$ as well as $\|s(Y)\| = O(\sqrt{d})$. We finish the proof by applying Lemma~\ref{lstein} to the set $Y \setminus W$.

\medskip
{\bf Remark 6.} The above proofs translate for arbitrary norms as long as the analogues of Lemmas~\ref{parall} and ~\ref{order} (or Lemmas~\ref{parallinf} and ~\ref{lstein})  may be established.
%\medskip

\section{Proof of Lemma~\ref{lstein}}\label{stein}

For this lemma it is natural to use Chobanyan's transference theorem~\cite{Cho} (see also \cite{Ba_08}), which connects Steinitz's theorem with sign assignments to vectors in a sequence.

Assume $v_1\etc v_n$ is a sequence of vectors from the unit ball $B$ of an arbitrary norm on $\R^d$. It is proved in \cite{BG} that there are signs $\eps_1\etc \eps_n =\pm 1$ such that
\begin{equation}\label{signs}
\max_{k=1\etc n} \Big\|\sum_{i=1}^k\eps_iv_i\Big\| \le 2d-1.
\end{equation}
This is a general bound that does not depend on $n$ and the norm. But better estimates are valid for specific norms and some (small) values of $n$. For fixed $B$ and $n$ let $F(B,n)$, the {\sl sign sequence constant of $B$}, be defined as the smallest number that one can write on the right hand side of (\ref{signs}), and let $F(B)=\sup_n F(B,n)$. It is quite easy to see for instance that $F(B_2^d,n) \le \sqrt n$ for all $n$ (but we don't need this). What we need is a result of Spencer \cite[Theorem 1.4]{Spe_max}:

\medskip\noindent
{\bf Fact 1.} $F(B_{\infty}^d,d) \le K \sqrt d$ where $K$ is a universal constant.
\medskip

Chobanyan's transference theorem \cite{Cho} says that, for every norm with unit ball $B$, $S(B)\le F(B)$, that is, the Steinitz constant is at most as large as the sign sequence constant. We need a slightly stronger variant, so we define $S(B,n)$ as the smallest number $R$ such that for every set $V \subset B$ with $s(V)=0$ and $|V|=n$ there is an ordering $v_1\etc v_n$ of the elements in $V$ such that
\[
\max_{k=1\etc n} \Big\|\sum_{i=1}^k v_i\Big\| \le R.
\]
Of course, $S(B)=\sup_n S(B,n)$.
%and $S(B,n)\le d$ follows from \cite{Sev} and from \cite{Gr_Se}.
Here comes the stronger version of Choba{\-}nyan's theorem, and comes without proof as the proof is identical with the original one.

\begin{theorem}\label{chob} For every norm in $\R^d$ with unit ball $B$, $S(B,n) \le F(B,n)$.
\end{theorem}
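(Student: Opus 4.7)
The plan is to reproduce Chobanyan's original proof of the transference theorem, keeping the parameter $n$ explicit throughout rather than passing immediately to the supremum $F(B)$. Let $V = \{v_1, \dots, v_n\} \subset B$ with $s(V) = 0$; the task is to exhibit an ordering of $V$ whose partial sums all have norm at most $F(B,n)$.

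First, I would fix any initial listing $w_1, \dots, w_n$ of the elements of $V$ and invoke the definition of $F(B,n)$ to obtain a sign assignment $\eps_i = \pm 1$ satisfying $\|\sum_{i=1}^k \eps_i w_i\| \le F(B,n)$ for every $k \le n$. Set $P = \{i : \eps_i = +1\}$ and $N = \{i : \eps_i = -1\}$, and note that the hypothesis $s(V) = 0$ forces $\sum_{i \in P} w_i = -\sum_{i \in N} w_i$, each of norm at most $F(B,n)/2$. The next step is the heart of Chobanyan's argument: one uses the signs to construct a permutation of $V$ whose partial sums can each be expressed through the signed partial sums $T_k = \sum_{i=1}^k \eps_i w_i$, thereby inheriting the bound $F(B,n)$. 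Concretely, I would order the indices by an interleaving procedure that processes the indices of $P$ and $N$ in their original order, exploiting the identity $\sum_{i\le k,\, i\in P} w_i = \tfrac{1}{2}(T_k + S_k)$, where $S_k = \sum_{i=1}^k w_i$ is the unsigned partial sum; this identity is what allows signed bounds to be transferred to unsigned ones once the global sum has been forced to vanish.

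The main obstacle is ensuring that the resulting unsigned partial sums do not inflate to a constant multiple of $F(B,n)$; this is the delicate bookkeeping that occupies Chobanyan's original paper, and it is precisely for this reason that the authors choose to omit a full proof. Crucially, nothing in that argument requires the supremum over sequence lengths to be taken first: the only invocation of the sign-sequence property is on sequences of length exactly $n$, so the same scheme produces the uniform bound $S(B,n)\le F(B,n)$ asserted in Theorem~\ref{chob}.
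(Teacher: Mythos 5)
Your overall strategy---fix $n$, obtain a sign sequence from the definition of $F(B,n)$, and run Chobanyan's rearrangement argument verbatim---is the same as the paper's: the paper likewise cites Chobanyan and simply states that the proof is identical with the original one, the point being exactly the one you make at the end, namely that the original argument never passes to the supremum over sequence lengths. So the conclusion you reach is sound and is all the paper asserts.

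As a \emph{proof}, though, the sketch has a genuine gap in the rearrangement step. The identity $\sum_{i\le k,\, i\in P} w_i = \tfrac{1}{2}(T_k + S_k)$ involves $S_k$, the unsigned partial sum of the arbitrary initial listing $w_1,\dots,w_n$. For $0<k<n$ this quantity is precisely what is uncontrolled---it is neither zero nor bounded by anything at hand---so the identity does not ``transfer signed bounds to unsigned ones'' as claimed; it is useful only at $k=n$, where $S_n=0$ gives $\bigl\|\sum_{i\in P} w_i\bigr\| \le \tfrac{1}{2}F(B,n)$ and nothing more. Moreover, describing the remaining obstacle as keeping the reordered partial sums below ``a constant multiple of $F(B,n)$'' misstates the target: the theorem claims $S(B,n)\le F(B,n)$ with no constant, so a constant-factor loss is not acceptable. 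Chobanyan's actual rearrangement lemma builds the permutation and the bound together by an iterative/recursive construction; it is not a consequence of fixing one listing, splitting into $P$ and $N$, and applying the identity above. Since the paper does not reproduce the argument either, this does not contradict the paper---but ``fix any listing, obtain signs, interleave $P$ and $N$, invoke the identity'' would not on its own establish the theorem.
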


Theorem~\ref{chob} and Fact 1 imply the following.

\medskip\noindent
{\bf Fact 2.} Given $V \subset B_{\infty}^d$ with $|V|=m$ where $m \le 5d$ and $s(V)=0$, there is an ordering $v_1\etc v_m$ of $V$ such that $\max_{h=1\etc m}\|\sum_1^h v_i\|_\infty \le K_1\sqrt d$, where $K_1$ is a universal constant.

\begin{proof}
We note first that for $m\le d$ this follows directly from Fact 1 and Theorem~\ref{chob} with $K_1=K$. For $m\ge d$, take the natural embedding of $\R^d$ into $\R^m$, the set $V$ lies in the $\ell_{\infty}$ unit ball of $\R^m$. Apply Fact 1 and Theorem~\ref{chob}
there, and you get an ordering of $V$ in $\R^d$ along which all partial sums have norm at most $K\sqrt{m}\le K\sqrt{5d}$. Thus Fact 2 holds with $K_1=\sqrt 5 K$.
\end{proof}

\begin{proof}[Proof of Lemma~\ref{lstein}] We need a concrete bound on $\|s(W)\|_\infty$, so suppose that $\|s(W)\|_\infty \le K_2\sqrt d$. For $w \in W$ define $w^*=w-\frac 1m s(W)$. Then $\|w^*\|_\infty \le \|w\|_\infty+\frac 1m \| s(W)\|_\infty \le 2$ as $s(W)$,
being the sum of $m$ vectors from $B_{\infty}^d$, has norm at most $m$. Further, $\sum_{w \in W} w^*=0$ and $W \subset 2B_{\infty}^d$. By Fact 2
there is an ordering $w_1\etc w_m$ of the vectors in $W$ such that for every $h$
\[
\Big\|\sum_1^h w_i^*\Big\|_\infty \le 2K_1 \sqrt d.
\]
We check that $\sum_1^h w_i = \sum_1^h w^*_i + \frac hm s(W)$ and so for every $h$
\[
\Big\|\sum_1^h w_i \Big\|_\infty \le \Big\|\sum_1^h w^*_i \Big\|_\infty + \Big\|s(W) \Big\|_\infty  \le 2K_1\sqrt d+ K_2\sqrt d=O(\sqrt d). \qedhere
\]
\end{proof}

\medskip

\section{An application: proof of Theorem~\ref{Helly}}

We proceed by induction on $m$. For $m=1$, the assertion is clearly true. For the induction step $(m-1) \to m$ let $V \subset B$ with $|V|=(k-1)m+1$ and $\|s(V)\|\le 1$. Set $v_0 = -s(V)$ so $\|v_0 \| \le 1.$ Define $V_0= V \cup \{ v_0\}$. Then $V_0 \subset B$ and $s(V_0)=0$. So by Theorem~\ref{thm1}, there exists a subset $U$ of $V_0$ of size $k$, with $\| s(U) \| \le 1$. We are done if $v_0 \notin U$. So suppose that $v_0 \in U$. Then $v_0\notin W : =V \setminus U$, and $\|s(W)\|\le 1$ because
\[
s(U) = -s(W).
\]
Here $W$ is of size $(m-1)(k-1) +1$, so the induction hypothesis implies that $W$ contains a subset $U$ of size $k$ with $\| s(U) \| \le 1$.
\qed

\medskip

We mention finally that Theorem~\ref{Helly} is equivalent to the following Helly type statement. If $V \subset B$ and $|V|=(k-1)m+1$, and $\|s(U)\| >1$ for every set $U \subset V$ of size $k$, then $\|s(V)\|>1$.

\bigskip
{\bf Acknowledgments.} Research of the first author was supported by SCIEX grant 12052. Research of the second author was partially supported by ERC Advanced Research Grant 267165 (DISCONV), and by Hungarian National Research Grant K 83767.
\bigskip

\vspace{0.4 cm}

 {\sc Gergely Ambrus}\\[1mm]
  {\footnotesize R\'enyi Institute of Mathematics}\\[-1mm]
  {\footnotesize Hungarian Academy of Sciences}\\[-1mm]
  {\footnotesize PO Box 127, 1364 Budapest, Hungary, and }\\[1mm]
 % {\footnotesize , and}\\[-1.5mm]
 % {\footnotesize and}\\[-1.5mm]
  {\footnotesize \'Ecole Polytechnique F\'ed\'erale de Lausanne}\\[-1mm]
  {\footnotesize EPFL SB  MATHGEOM DCG Station 8}\\[-1mm]
  {\footnotesize CH-1015 Lausanne, Switzerland}\\[1mm]
  {\footnotesize e-mail: {\tt ambrus@renyi.hu}}\\

 {\sc Imre B\'ar\'any}\\[1mm]
  {\footnotesize R\'enyi Institute of Mathematics}\\[-1mm]
  {\footnotesize Hungarian Academy of Sciences}\\[-1mm]
  {\footnotesize PO Box 127, 1364 Budapest, Hungary, and }\\[1mm]
 % {\footnotesize , and}\\[-1.5mm]
 % {\footnotesize and}\\[-1.5mm]
  {\footnotesize Department of Mathematics}\\[-1mm]
  {\footnotesize University College London}\\[-1mm]
  {\footnotesize Gower Street, London WC1E 6BT, England}\\[1mm]
  {\footnotesize e-mail: {\tt barany@renyi.hu}}\\

{\sc Victor S. Grinberg}\\[1mm]
{\footnotesize 5628 Hempstead Rd, Apt 102, Pittsburgh}\\[-1mm]
  {\footnotesize PA 15217, USA}\\[1mm]
  {\footnotesize e-mail: {\tt victor\_\,grinberg@yahoo.com}\\

\end{document}